\newtheorem{theorem}{Theorem}[section]
\newtheorem{lemma}[theorem]{Lemma}
\newtheorem{proposition}[theorem]{Proposition}
\newtheorem{definition}[theorem]{Definition}
\numberwithin{equation}{section}
\begin{document}

\newcommand{\cc}{\mathfrak{c}}
\newcommand{\N}{\mathbb{N}}
\newcommand{\Q}{\mathbb{Q}}
\newcommand{\R}{\mathbb{R}}

\newcommand{\PP}{\mathbb{P}}
\newcommand{\forces}{\Vdash}
\newcommand{\dom}{\text{dom}}
\newcommand{\osc}{\text{osc}}

\title [Isometrically Universal] 
{An isometrically universal Banach space induced by a non-universal Boolean algebra}

\author{Christina Brech}
\thanks{The first author was partially supported by FAPESP grant (2012/24463-7) and by CNPq grant (307942/2012-0).} 
\address{Departamento de Matem\'atica, Instituto de Matem\'atica e Estat\'\i stica, Universidade de S\~ao Paulo,
Caixa Postal 66281, 05314-970, S\~ao Paulo, Brazil}
\email{brech@ime.usp.br}

\author{Piotr Koszmider}
\address{Institute of Mathematics, Polish Academy of Sciences,
ul. \'Sniadeckich 8,  00-656 Warszawa, Poland}
\email{\texttt{piotr.koszmider@impan.pl}}
\thanks{The research of the second author was partially supported by   grant
PVE Ci\^encia sem Fronteiras - CNPq (406239/2013-4).} 

\subjclass[2010]{46B25, 03E35, 54D30}

\begin{abstract} 
Given a Boolean algebra $A$, we construct another Boolean algebra $B$ with no uncountable
well-ordered chains such that the Banach space
of real valued continuous functions $C(K_A)$ embeds isometrically into $C(K_B)$, 
where $K_A$ and $K_B$ are the Stone spaces of $A$ and $B$ respectively.
As a consequence we obtain the following: 
If there exists an isometrically universal Banach space for the class of Banach spaces of a given uncountable density $\kappa$, then there is  another
such space which
is induced by a Boolean algebra which is not universal for Boolean algebras 
of cardinality $\kappa$. Such a phenomenon cannot happen on the level of separable Banach
spaces and countable Boolean algebras.
This is related to the open question if the existence of an isometrically 
universal Banach space and of a universal Boolean algebra are equivalent
on the nonseparable level (both are true on the separable level).
\end{abstract}

\maketitle
\section{Introduction}

If $A$ is a Boolean algebra, we denote by $K_A$ the Stone space of $A$,
that is a  compact Hausdorff totally disconnected 
space such that $A$ is isomorphic to the algebra of all clopen subsets of $K_A$ (see \cite{koppelberg}).
 $C(K)$ denotes the Banach space of real-valued continuous functions on a compact Hausdorff space $K$
with the supremum norm.

Given an infinite cardinal $\kappa$, let $\mathcal B_\kappa$ denote the class of Banach spaces of density at most
 $\kappa$, $\mathcal C_\kappa$ denote
the class of compact spaces of weight at most $\kappa$ and
$\mathcal A_\kappa$ denote the class of Boolean algebras of cardinality at most $\kappa$.
We say that $X\in \mathcal B_\kappa$ is isometrically universal for $\mathcal B_\kappa$ 
if for every $Y\in \mathcal B_\kappa$  there is a linear isometry $T: Y\rightarrow X$ onto its range. We say that $K\in \mathcal C_\kappa$ is universal for $\mathcal C_\kappa$
if for every $L\in \mathcal C_\kappa$ there is a continuous surjection $\phi: K\rightarrow L$ and we say that $ A\in \mathcal A_\kappa$ is universal for $\mathcal A_\kappa$ if for every $B\in \mathcal A_\kappa$
there is a Boolean isomorphism $h: B\rightarrow A$ onto its range.

Classical functorial arguments involving the dual ball of a Banach space imply (see e.g. the introduction to
\cite{universal} or \cite{unisur}) that the first two statements below 
are equivalent (by the Stone duality) and that they imply the third one:
\begin{itemize}
\item There is a universal Boolean algebra for $\mathcal A_{2^\omega}$.
\item There is a universal compact Hausdorff space  for $\mathcal C_{2^\omega}$.
\item There is an isometrically universal Banach space for $\mathcal B_{2^\omega}$.
\end{itemize}
 By classical results (e.g. the Banach-Mazur Theorem) the three statements are true if we replace $2^\omega$ by $\omega$ (for a survey on related topics see \cite{unisur}). 
Moreover by the results of
I.I. Parovichenko \cite{parovicenko} and A. S. Esenin-Volpin \cite{volpin},  
GCH implies that all the above statements are true.
The consistency of the failure has been proved
by A. Dow and K. P. Hart for Boolean algebras and compact spaces (see \cite{dowhart})
and by S. Shelah and A. Usvyatsov for isometric embeddings (see \cite{shelahusvyatsov}).
Actually one can even prove the nonexistence
of a universal Banach space in $\mathcal B_{2^\omega}$
for  isomorphic embeddings (see \cite{universal, ug}).  
 Whether the equivalence of all the three statements above  can be proved
without  additional assumptions
  is the main question
that motivates our research in this paper. Not knowing how to attack it, an 
intermediate question is whether an isometrically universal 
Banach space of the form $C(K_A)$ must be induced by a universal Boolean algebra $A$,
as is known that the existence of an isometrically universal Banach space
implies the existence of one of the form $C(K_A)$ (Fact 1.1. of \cite{universal}).
This is the case on the countable level: one can see that
if a $C(K_A)$ is separable and isometrically universal\footnote{Actually, $C(K_A)$ being isomorphically universal suffices.}
for separable Banach spaces, then $K_A$ cannot be scattered and so
$A$ must contain an infinite free algebra, meaning that $A$ is a universal algebra
among countable algebras and $K_A$ is a universal compact space
among metrizable compact spaces. The main purpose of this paper is to prove that 
the situation is different on the uncountable level:

\begin{theorem} For any uncountable $\kappa$, if there exists an isometrically universal
Banach space for $\mathcal B_\kappa$, then there is also
such a space of the form $C(K_A)$ for some Boolean algebra $A$
where $A$ does not contain well-ordered uncountable chains. In particular,
$A$ is not a universal Boolean algebra for $\mathcal A_\kappa$ nor $K_A$ is universal for $\mathcal C_\kappa$.
\end{theorem}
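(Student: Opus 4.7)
The plan is to reduce the theorem to a single technical construction, announced in the abstract, and then deduce the non-universality of $A$ and $K_A$ from the absence of uncountable well-ordered chains.

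First, by Fact 1.1 of \cite{universal}, one may assume that any given isometrically universal space for $\mathcal B_\kappa$ is already of the form $C(K_{A_0})$ with $A_0\in\mathcal A_\kappa$; the cardinality bound on $A_0$ follows from the density bound on $C(K_{A_0})$. Next, invoke the paper's main construction to produce a Boolean algebra $A\in\mathcal A_\kappa$ with no uncountable well-ordered chains together with a linear isometric embedding $T\colon C(K_{A_0})\to C(K_A)$. Composing $T$ with the universal embeddings into $C(K_{A_0})$ shows that every $Y\in\mathcal B_\kappa$ embeds isometrically into $C(K_A)$; since $|A|\le\kappa$ forces $C(K_A)\in\mathcal B_\kappa$, the space $C(K_A)$ is an isometrically universal Banach space of the desired form. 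The hard part here, and the main technical obstacle of the paper, will be the construction itself: one must accommodate an isometric---not merely isomorphic---copy of $C(K_{A_0})$ inside $C(K_A)$ while at the same time forbidding all uncountable well-ordered chains in $A$. This immediately rules out the naive strategy of embedding $A_0$ as a Boolean subalgebra of $A$, since such an embedding would transport any chain of $A_0$ into $A$; the actual embedding of $C(K_A)$-spaces must therefore come from a more flexible isometric mechanism that does not pass through a Boolean embedding of the algebras.

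For the final ``in particular'' clause, I would use the following functorial observations. An injective Boolean homomorphism $h\colon B\to A$ preserves the order relation, so an uncountable well-ordered chain in $B$ would map to one in $A$. Similarly, for any continuous surjection $\phi\colon K_A\twoheadrightarrow L$, the map $U\mapsto\phi^{-1}(U)$ is an injective Boolean homomorphism from the clopen algebra of $L$ into $A$, so an uncountable well-ordered chain of clopens in $L$ would lift to one in $A$. Since $\kappa\ge\omega_1$, the interval algebra on $\omega_1$ lies in $\mathcal A_\kappa$ and contains such a chain, while the ordinal space $\omega_1+1$ lies in $\mathcal C_\kappa$ and its clopen algebra contains such a chain; these witness respectively that $A$ is not universal for $\mathcal A_\kappa$ and that $K_A$ is not universal for $\mathcal C_\kappa$.
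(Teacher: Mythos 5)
Your proposal is correct and follows essentially the same route as the paper: reduce to a universal space of the form $C(K_{A_0})$ via Fact 1.1 of \cite{universal}, apply the main construction (Proposition \ref{mainprop}) to get a chain-free algebra $A$ with $C(K_{A_0})$ isometrically inside $C(K_A)$, and derive non-universality from the fact that the interval algebra of $\omega_1$ (equivalently, continuous surjections onto $[0,\omega_1]$) would transport an uncountable well-ordered chain into $A$. You also correctly identify that the entire weight of the argument rests on the construction in Proposition \ref{mainprop}, which is exactly how the paper structures it.
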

\begin{proof}
Suppose  that $X$ is an isometrically universal Banach space for $\mathcal B_\kappa$. By the above discussion,
by isometrically embedding $X$ into $C(B_{X^*})$ and taking a totally disconnected continuous preimage of $B_{X^*}$, we may assume that $X$ is of the form $C(K_A)$
for some Boolean algebra $A$. Then, Proposition \ref{mainprop} produces a Boolean algebra
$B$ of the same cardinality $\kappa$ such that $C(K_A)$ isometrically embeds into $C(K_B)$ 
and hence $C(K_B)$ is also isometrically universal for the same class of Banach spaces $\mathcal B_\kappa$.
However, still by Proposition \ref{mainprop}, the Boolean algebra generated by a well-ordered chain of type $\omega_1$
cannot be embedded into $B$. By the Stone duality, $K_B$ cannot be continuously mapped onto 
$[0,\omega_1]$ with the order topology. Therefore, $K_B$ is not universal for $\mathcal A_\kappa$ and $K_B$ is not universal for $\mathcal C_\kappa$.
\end{proof}

In the literature there are some results assuming the axiom OCA and concerning similar behaviour
of the Boolean algebra $\wp(\N)/Fin$ and 
 the Banach space $\ell_\infty/c_0\equiv C(K_{\wp(\N)/Fin})$.
 In fact,
there is at present no known example showing that $\ell_\infty/c_0$
is not isometrically universal under OCA while there are
many examples showing that $\wp(\N)/Fin$ is not universal (\cite{almostno, dowhartmeasure,
quotients}).
For example, A. Dow and K. P. Hart have proved in \cite{dowhartmeasure} that
OCA implies that the measure algebra $M$ does not embed into $\wp(\N)/Fin$,
while it is well known that $C(K_{M})\equiv L_\infty$ isometrically embeds into
$C(K_{\wp(\N)/Fin})\equiv\ell_\infty/c_0$ (see e.g.,  p. 304 of \cite{amirarbel}).
At the same time, while well-known that
the algebra of clopen subsets of the cozero set in $\N^*$ does
not embed into $\wp(\N)/Fin$ under OCA (see \cite{quotients}),
its is still unknown if the corresponding
Banach space $\ell_\infty(\ell_\infty/c_0)$
 can be embedded under OCA into $\ell_\infty/c_0$ (cf. \cite{sums, dowpfa}).


Our paper contains basically one construction: given a Boolean algebra
$A$, we construct another Boolean algebra $B$ such that $C(K_A)$ embeds isometrically into
$C(K_B)$ but $B$ does not contain well-ordered uncountable chains (Proposition \ref{mainprop}).
Thus, when $A$ does contain uncountable well-ordered chains as, for example, in the cases
of Boolean algebras of the clopen subsets of $[0,\omega_1]$ or $\N^*$, we cannot have
a Boolean embedding of $A$ into $B$. $B$ is the algebra of clopen sets of some (standard)
totally disconnected preimage of the dual ball $B_{C(K)^*}$ of $C(K_A)$ with the 
weak$^*$ topology. In Section 2 we show that if it had an uncountable well-ordered chain
of clopen sets we would have a chain of some sets in the ball $B_{C(K)^*}$ and this
cannot occur as is shown in Section 3.

It would be interesting to know if objects other than uncountable well-ordered chains can be used in the
above argument.
Or conversely, for which Boolean algebras $A$, the existence of an isometric embedding
$C(K_A)$ into $C(K_B)$ implies the existence of a Boolean embedding of $A$ into $B$.
By Theorem 12.30 (ii) of \cite{fabian} these cannot be uncountable antichains,
i.e., the algebra $A=FinCofin(\kappa)$ for any $\kappa$ has the above property for any algebra $B$.
Also, if $A$ has an independent family of cardinality $\kappa$ and $C(K_A)$ isometrically
embeds into $C(K_B)$, by the Holszty\'nski Theorem (\cite{holsztynski}) we have a closed set
$F\subseteq K_B$ which maps onto $\{0,1\}^\kappa$. This map can be always extended
to a totally disconnected superspace and in particular $K_B$ maps onto $\{0,1\}^\kappa$,
and so $B$ contains an independent family of cardinality $\kappa$.
But we do not know if an isometric embedding of $C(K_{\wp(\N)})\equiv\ell_\infty$
into $C(K_B)$ implies the existence of an isomorphic copy of $\wp(\N)$ in $B$.  

Terminology should be standard, concerning the Banach spaces we follow
\cite{fabian} and for Boolean algebras we follow \cite{koppelberg}. $f[X]$, $f^{-1}[X]$ denote the image  and the preimage of $X$ under $f$, respectively. $f|X$ denotes the restriction of $f$ to $X$, $\{0,1\}^{<\N}=\bigcup_{n\in \N}\{0,1\}^n$.

\section{Chains in  totally disconnected preimages}\label{secchains}

\begin{lemma}\label{interior} Let $K$ and $L$ be compact spaces and $\psi: K\rightarrow L$ be a surjective
continuous mapping. Suppose $U\subseteq K$ is clopen. For every $x \in U$, if $\psi^{-1}[\{\psi(x)\}] \subseteq U$, then $\psi(x) \in int(\psi[U])$. Hence,
\[\psi[\{x\in U: \psi^{-1}[\{\psi(x)\}]\subseteq U\}]\subseteq int(\psi[U]).\]
\end{lemma}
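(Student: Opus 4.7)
The plan is to use the standard fact that a continuous map from a compact space to a Hausdorff space is closed, and exhibit an explicit open neighborhood of $\psi(x)$ inside $\psi[U]$.

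First, since $U$ is clopen, $K\setminus U$ is closed and therefore compact. Its continuous image $\psi[K\setminus U]$ is compact in $L$, hence closed. Consequently
\[
V \;:=\; L\setminus \psi[K\setminus U]
\]
is an open subset of $L$.

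Next, I would verify the inclusion $V\subseteq \psi[U]$. Given $y\in V$, surjectivity of $\psi$ yields some $z\in K$ with $\psi(z)=y$; since $y\notin \psi[K\setminus U]$, the point $z$ cannot lie in $K\setminus U$, so $z\in U$ and $y\in \psi[U]$.

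Finally, I would unpack the hypothesis: $\psi^{-1}[\{\psi(x)\}]\subseteq U$ means that no preimage of $\psi(x)$ sits in $K\setminus U$, i.e., $\psi(x)\notin \psi[K\setminus U]$, so $\psi(x)\in V$. Combined with $V$ being open and $V\subseteq \psi[U]$, this gives $\psi(x)\in \text{int}(\psi[U])$, which is the desired conclusion. The second displayed inclusion is then just this pointwise statement applied uniformly to every $x$ in the bracketed set. There is no real obstacle here beyond ensuring that $L$ is Hausdorff (as is the case for the compact spaces considered in the paper) so that compact images are closed; the argument is otherwise a one-line invocation of the closed-map property.
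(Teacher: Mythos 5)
Your proof is correct and follows essentially the same route as the paper's: both arguments observe that $\psi[K\setminus U]$ is closed (as a compact image), that surjectivity gives $L\setminus\psi[K\setminus U]\subseteq\psi[U]$, and that the hypothesis places $\psi(x)$ in this open set. The only difference is that you spell out the intermediate steps (and the Hausdorff caveat) a bit more explicitly than the paper does.
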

\begin{proof} Note that given $x \in U$, if $\psi^{-1}[\{\psi(x)\}]\subseteq U$,
then $\psi(x)\not\in \psi[K\setminus U]$. But $L = \psi[U]\cup\psi[K\setminus U]$,
 and so
$L\setminus \psi[K\setminus U]$ is an open set included in $\psi[U]$,  so $\psi(x)\in int(\psi[U])$. 
\end{proof}

The previous lemma will be applied to a restriction of the function $\phi_I$ defined as follows. Let $\phi: \{0,1\}^\N \rightarrow [0,1]$ be given by
$$\phi(x)=\sum_{n=1}^\infty \frac{x(n)}{2^n}.$$
Given any set of indices $I$, we  consider 
$\phi_I: ( \{0,1\}^\N)^I \rightarrow [0,1]^I$ which is
 defined  coordinatewise by 
\[\phi_I(x)(i) 
= \phi(x(i))\]
 for $x \in ( \{0,1\}^\N)^I$ and $i \in I$.
Since $\phi_I$ is defined
coordinatewise we immediately obtain the following:

\begin{lemma}\label{combination} Suppose $x, x', x''\in  ( \{0,1\}^\N)^I$ are such that $\phi_I(x)=\phi_I(x')$
and that  that for each $i\in I$ either $x''(i)=x(i)$ or $x''(i)=x'(i)$. Then
$\phi_I(x'')=\phi_I(x)=\phi_I(x')$.
\end{lemma}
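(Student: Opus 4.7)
The plan is to unpack the definitions and verify the equality of $\phi_I(x'')$ with $\phi_I(x)$ and $\phi_I(x')$ one coordinate at a time, exploiting the fact that $\phi_I$ was defined coordinatewise precisely so that such mixing arguments go through.

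First I would fix an arbitrary index $i \in I$ and use the hypothesis $\phi_I(x) = \phi_I(x')$ to deduce that $\phi(x(i)) = \phi(x'(i))$; call this common real number $r_i \in [0,1]$. The hypothesis on $x''$ says that for this $i$ either $x''(i) = x(i)$ or $x''(i) = x'(i)$, so in either case $\phi(x''(i)) = r_i$. Applying the coordinatewise definition of $\phi_I$ once more, this gives $\phi_I(x'')(i) = \phi(x''(i)) = r_i = \phi_I(x)(i) = \phi_I(x')(i)$.

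Since $i \in I$ was arbitrary, the functions $\phi_I(x'')$, $\phi_I(x)$ and $\phi_I(x')$ agree at every coordinate and are therefore equal as elements of $[0,1]^I$. There is no real obstacle here: the lemma is essentially a tautology about functions defined pointwise, and the only thing to be careful about is not to confuse the ambient equality of functions in $[0,1]^I$ with the coordinatewise equalities in $[0,1]$ that directly witness it. The purpose of stating the lemma separately is presumably to have a clean reference when later in the paper one builds a third point $x''$ by mixing two preimages of the same point in $[0,1]^I$; the content above is exactly what such an application will need.
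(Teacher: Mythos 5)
Your proof is correct and is exactly the coordinatewise verification the paper has in mind: the paper states this lemma without proof, remarking only that it is immediate because $\phi_I$ is defined coordinatewise, and your argument simply spells that out.
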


We will consider the standard basis of clopen sets of $ \{0,1\}^\N$, i.e., the sets of the form
$$[s]=\{x\in  \{0,1\}^\N: s\subseteq x\}$$
for  $s\in  \{0,1\}^{<\N}$.  By the definition of the product topology, the sets of the form
$$U(i, [s]) = \{x \in (\{0,1\}^\N)^I: x(i) \in [s]\},$$
for $i \in I$ and $s\in  \{0,1\}^{<\N}$ form a topological subbasis 
 for $(\{0,1\}^\N)^{I}$ which consists of clopen sets. 
Note that
$$\phi[[s]]=\Big[\sum_{1\leq n\leq |s|}\frac{x(n)}{2^n},\ \  
\ \sum_{1\leq n\leq |s|}\frac{x(n)}{2^n}+\frac{1}{2^{|s|}}\Big]
\leqno (*)$$
for any $s\in \{0,1\}^{<\N}$. Hence, $\phi$ sends the standard basic clopen sets onto
closed subintervals of $[0,1]$, in particular onto convex sets.

\begin{definition}
For a subspace $X$ of $(\{0,1\}^\N)^I$, $n\in \N$ and a subset $J\subseteq I$
 we will say that $Y\subseteq X$ $n$-depends on $J\subseteq I$
in $X$ if and only if whenever $x, y\in X$ and $x(i)|n=y(i)|n$
for  each $i\in J$, then
\[x\in Y \ \Leftrightarrow\  y\in Y.\]
\end{definition}

It is immediate that $U(i,[s])$ $n$-depends on $\{i\}$ in $(\{0,1\}^\N)^I$ where $n=|s|$
and so, $U(i,[s]) \cap L$ $n$-depends on $\{i\}$ in any $L\subseteq (\{0,1\}^\N)^I$. Since clopen
sets of compact Hausdorff spaces are Boolean combinations of finitely many sets from any clopen subbasis, it follows that
any clopen subset of any closed $L\subseteq (\{0,1\}^\N)^I$
$n$-depends on $J\subseteq I$ in $L$, for $n$ being any natural number larger than $|s|$ for all $s$ which appear in the finite Boolean combination of sets of the form $U(i,[s])$ and $J$ is the set of all $i$ which appear in such a Boolean combination. 
We are now ready
for the main result of this section.

\begin{proposition}\label{chain} Let $I$ be a set and 
let $K$ be a closed convex subspace of $[0,1]^I$
 such that no closed convex subspace $F$ of $K$ has an uncountable well-ordered chain of open (in $F$) sets $(V_\alpha)_{\alpha<\omega_1}$ satisfying
 $\overline{V}_\alpha \subseteq V_\beta$ for $\alpha<\beta<\omega_1$. Then $L = {\phi_I}^{-1}[K]$ has no uncountable well-ordered chain of clopen sets $(U_\alpha)_{\alpha<\omega_1}$.
\end{proposition}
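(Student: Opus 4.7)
The plan is to argue by contradiction. Suppose $(U_\alpha)_{\alpha<\omega_1}$ is a strictly increasing chain of clopen subsets of $L$; I aim to build from it, inside a suitably chosen closed convex $F\subseteq K$, an uncountable strictly increasing chain of open sets $(V_\alpha)_{\alpha<\omega_1}$ satisfying $\overline{V}_\alpha\subseteq V_\beta$ for $\alpha<\beta$, contradicting the hypothesis on $K$. Each $U_\alpha$ is the restriction to $L$ of a clopen $U'_\alpha\subseteq(\{0,1\}^\N)^I$ obtained as a finite Boolean combination of subbasic clopens $U(i,[s])$, and is uniquely specified by a finite $J_\alpha\subseteq I$, a level $n_\alpha$, and a ``type set'' $T_\alpha\subseteq(\{0,1\}^{n_\alpha})^{J_\alpha}$ via $x\in U'_\alpha\Leftrightarrow(x(i)|n_\alpha)_{i\in J_\alpha}\in T_\alpha$. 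The $\Delta$-system lemma and pigeonholing let me thin the chain to an uncountable subchain on which $\{J_\alpha\}$ is a $\Delta$-system with root $J^*$, the parameters $n_\alpha=n$ and $|K_\alpha|=k$ (for $K_\alpha:=J_\alpha\setminus J^*$) are constant, and, under the natural enumerations of the $K_\alpha$'s, the type sets $T_\alpha$ all correspond to a single template $T$.

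The natural candidate is $V_\alpha:=\mathrm{int}_F(\phi_I[U_\alpha]\cap F)$, open in $F$, with $V_\alpha\subseteq V_\beta$ for $\alpha<\beta$ immediate from $U_\alpha\subseteq U_\beta$ and the monotonicity of images. Since $\phi_I[U_\alpha]$ is compact and hence closed in $K$, one has $\overline{V}_\alpha\subseteq\phi_I[U_\alpha]\cap F$. Applying Lemma \ref{interior} to the surjection $\phi_I|L$, the required condition $\overline{V}_\alpha\subseteq V_\beta$ reduces to showing that for every $x\in U_\alpha$ with $\phi_I(x)\in F$, the entire fiber $\phi_I^{-1}[\{\phi_I(x)\}]\cap L$ lies inside $U_\beta$. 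By Lemma \ref{combination}, a fiber point $y$ of $\phi_I(x)$ differs from $x$ only at coordinates $i$ where $\phi(x(i))$ is dyadic, and only by switching between the two dyadic representations; the level-$n$ type $(y(i)|n)_{i\in J_\beta}$ then agrees with $(x(i)|n)_{i\in J_\beta}$ except at coordinates where $\phi(x(i))$ lies in the finite set $D_n:=\{k/2^m:1\le m\le n,\ 0<k<2^m\}$ of early dyadics.

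The obstruction thus localizes to coordinates $i\in J_\beta$ with $\phi(x(i))\in D_n$, at which the fiber flip can alter the $J_\beta$-type enough to leave $T$. To handle the root part I would take $F=K\cap\prod_{i\in I}F_i$, with $F_i=[0,1]$ for $i\notin J^*$ and, for $i\in J^*$, $F_i$ a closed subinterval of $[0,1]$ disjoint from $D_n$; such an $F$ is closed and convex in $K$, and the $F_i$'s may be chosen so that $T$ still imposes a nontrivial constraint after intersection with $F$. The hardest step will be the analogous control of the non-root coordinates $K_\beta$, since these vary with $\beta$: one needs a further combinatorial refinement based on the structure of $T$ — either ensuring, via additional pigeonholing, that $T$ is closed under the relevant dyadic switches on the non-root slots, or showing that for each $x$ the set of $\beta$ for which $K_\beta$ meets the dyadic coordinates of $x$ is sparse enough to be discarded. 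A final thinning then guarantees strict inclusions of the $V_\alpha$'s, producing the required well-ordered chain in $F$ and yielding the contradiction.
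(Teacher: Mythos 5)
Your setup (reduction by the $\Delta$-system lemma to a fixed level $n$ and a fixed template, the observation via Lemma \ref{combination} that fiber points of $\phi_I$ differ only by dyadic switches, and the localization of the problem to switches at levels $\le n$ on coordinates of $J_\beta$) tracks the paper's proof closely, but the step you explicitly defer --- controlling the non-root coordinates $J_\beta\setminus\Delta$ --- is where the one essential idea of the argument lives, and neither of your two proposed remedies can supply it. Pigeonholing only selects a subfamily on which the template is constant; it cannot make the template closed under dyadic switches, and it typically is not: already for $U_\beta=U(i,[s])$ with $s=(1,0,\dots,0)$ the point $1/2\in\phi[[s]]$ has its two $\phi$-preimages $100\cdots$ and $011\cdots$ on opposite sides of $[s]$. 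The per-$x$ sparseness idea fails for a different reason: $\overline{V_\alpha}\subseteq V_\beta$ is a single statement about the pair $(\alpha,\beta)$ that must hold uniformly over all $x\in U_\alpha$ with $\phi_I(x)\in F$, and as $x$ varies the coordinates where $\phi(x(i))$ is an early dyadic sweep out all of $I$ (a single $x$ can even be dyadic on every coordinate), so there is no fixed sparse set of bad $\beta$'s to discard. Your ``final thinning'' for distinctness of the $V_\alpha$'s is also unsubstantiated, since a priori the sets $\phi_I[U_\alpha]\cap F$ could all have the same, or empty, interior.

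The missing ingredient is the paper's splicing argument. If a fiber over a point of $F$ meets $U_\alpha$ at $x$ and the complement of $U_\beta$ at $x'$, define $x''$ to agree with $x'$ on $J_\beta\setminus\Delta$ and with $x$ elsewhere: by Lemma \ref{combination} $x''$ lies in the same fiber (hence in $L'$), it lies in $U_\alpha$ because $J_\alpha\setminus\Delta$ and $J_\beta\setminus\Delta$ are disjoint and the root is frozen, and it lies outside $U_\beta$ because $U_\beta\cap L'$ $n$-depends on $J_\beta\setminus\Delta$ --- contradicting $U_\alpha\subseteq U_\beta$. The paper runs this in the contrapositive-free direction (it invokes the hypothesis on $K$ to get one pair with $\overline{V_\alpha}\not\subseteq V_\beta$ and then derives $U_\alpha\not\subseteq U_\beta$), but the same splicing, read contrapositively, is exactly what would prove the uniform fiber containment your forward construction needs via Lemma \ref{interior}. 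Without it, your proposal is an accurate description of the difficulty rather than a proof; you use Lemma \ref{combination} only to describe fibers, never to combine two fiber points into a third, which is the crux.
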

\begin{proof} Suppose that 
 $(U_\alpha\cap L)_{\alpha \in \omega_1}$ is a sequence 
of clopen subsets of $L$ where $U_\alpha$'s
are clopen subsets of $(\{0,1\}^\N)^I$. As we noticed, it follows that each $U_\alpha \cap L$ 
$n_\alpha$-depends on some finite set $J_\alpha\subseteq I$ in $(\{0,1\}^\N)^I$,
for some $n_\alpha$.
Using the $\Delta$-system lemma (see \cite{jech}) and the fact that $\{0,1\}^{<\N}$ and $\N$ are countable, 
we may assume that $(J_\alpha)_{\alpha < \omega_1}$ is a $\Delta$-system with root $\Delta$
and each $U_\alpha$ $n$-depends on $J_\alpha$ in $(\{0,1\}^\N)^I$ for a fixed $n\in \N$ and all $\alpha<\omega_1$.

For each $f\in (\{0,1\}^n)^\Delta$, consider 
\[U(f)=\bigcap_{i\in \Delta} U(i, [f(i)]).\]
 $(\{0,1\}^{\N})^I$ is the disjoint union of the family of clopen sets $\{U(f): f\in (\{0,1\}^n)^\Delta\}$.
It follows that there is $f_0\in (\{0,1\}^n)^\Delta$ such that 
$(U_\alpha\cap L\cap U(f_0))_{\alpha<\omega_1}$ forms an uncountable sequence. By
going to a subsequence we may assume that all elements $U_\alpha\cap L\cap U(f_0)$ are distinct.
Consider $L'=L\cap U(f_0)$, $F=\phi_I[L']$ and $\psi=\phi_I| L'$ from $L'$ onto $F$ and put
\[V_\alpha=int_{F}(\psi [U_\alpha\cap L']).\]
Note that $F=\phi_I[\phi_I^{-1}[K] \cap U(f_0)] = \phi_I[U(f_0)] \cap K$, which is 
convex as the intersection of two convex sets.
Secondly, note that $U_\alpha\cap L'$ $n$-depends on $J_\alpha\setminus\Delta$ in $L'$, for each $\alpha<\omega_1$.
Indeed, whenever $x, y\in L'$ 
we have that $x(i)|n=f_0(i)
=y(i)|n$ for all $i\in\Delta$ and so, whenever we have additionally that
 $x(i)|n=y(i)|n$ for all $i\in J_\alpha\setminus\Delta$,
we may use the fact that $U_\alpha$ $n$-depends on $J_\alpha$ in $(\{0,1\}^\N)^{I}$.

By the hypothesis on convex sets in $K$ applied to $F$, there are $\alpha<\beta<\omega_1$
such that
$\overline{V_\alpha}\not\subseteq V_\beta$. Aiming at a 
contradiction, let us assume that $(U_\alpha \cap L)_{\alpha < \omega_1}$ is a well-ordered chain and hence, $U_\alpha\cap L'\subseteq U_\beta\cap L'$. Then, since $\psi[U_\alpha\cap L']$ is a closed set,
$\overline{V_\alpha}\subseteq \psi[U_\alpha\cap L']$ and
we conclude that there is
\[y\in \psi [U_\alpha\cap L']\setminus int_F(\psi [U_\beta\cap L'])
\subseteq \psi [U_\beta\cap L']\setminus int_F(\psi [U_\beta\cap L']).\] 
Let $x\in U_\alpha\cap L'$ be such that $\psi(x)=y$.
 Lemma \ref{interior} gives that $\psi^{-1}[\{y\}]\not\subseteq U_\beta\cap L'$ and so
there is $x'\in L'$ such that  $x'\not \in U_\beta\cap L'$ but $\psi(x')=y$. 
Now we will combine $x$ and $x'$ following  Lemma \ref{combination} and will obtain
a contradiction with the hypothesis that $U_\alpha\cap L'\subseteq U_\beta\cap L'$.
Define
$$x''(\xi) = \left\{ \begin{array}{ll}
     x'(\xi) & \text{if } \xi \not \in J_\beta \setminus \Delta\\
     x(\xi) & \text{otherwise.}
         \end{array}
                                         \right.$$ 
By Lemma \ref{combination} the point $x''$ is in $L$.
Note that $x''\in U(f_0)$ as $x''|\Delta=x|\Delta$ i.e., $x''\in L\cap U(f_0)=L'$. Also,
$x''|J_\alpha=x|J_\alpha$ and so $x''\in U_\alpha$ 
since $U_\alpha\cap L'$ $n$-depends on $J_\alpha$ in $L'$ and $x \in U_\alpha$. On the other
hand, $x''|(J_\beta\setminus\Delta)=x'|(J_\beta\setminus\Delta)$, so
$x''\not \in U_\beta$ as $U_\beta\cap L'$ $n$-depends on $J_\beta\setminus\Delta$ in $L'$ and $x' \notin U_\beta$.
This shows that $U_\alpha \cap L' \not\subseteq U_\beta \cap L'$, contradicting our hypothesis. Hence, $(U_\alpha \cap L)_{\alpha<\omega_1}$
is not a well-ordered chain and this completes the proof of the proposition.
\end{proof}

\section{Well-ordered chains in the dual ball}

\begin{proposition}\label{ballBanach}
If $F$ is any closed convex subspace of the dual unit ball of a Banach space endowed with the weak$^*$ topology, then $F$ does not have an uncountable well-ordered chain of open sets $(V_\alpha)_{\alpha < \omega_1}$ such that 
$\overline{V_\alpha} \subseteq V_\beta$ for any $\alpha<\beta<\omega_1$. 
\end{proposition}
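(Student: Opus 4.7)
The plan is to argue by contradiction, combining a $\Delta$-system reduction on basic weak$^*$-neighborhoods (in the spirit of the combinatorial core of Proposition~\ref{chain}) with the convexity of $F$ and the weak$^*$-compactness of the dual ball. Suppose such a chain $(V_\alpha)_{\alpha<\omega_1}$ exists in $F\subseteq B_{X^*}$. After thinning to an uncountable subchain, I assume $V_{\alpha+1}\setminus\overline{V_\alpha}\neq\emptyset$ and pick $y_\alpha\in V_{\alpha+1}\setminus\overline{V_\alpha}$. Since the weak$^*$-topology on $X^*$ is generated by the evaluations $z\mapsto z(x)$ with $x\in X$, I pick a basic weak$^*$-open neighborhood
\[
U_\alpha=\{z\in F:|z(x)-y_\alpha(x)|<\epsilon_\alpha\text{ for all }x\in S_\alpha\}
\]
of $y_\alpha$, with $S_\alpha\subseteq B_X$ finite (rescaling the vectors as needed), such that $U_\alpha\cap\overline{V_\alpha}=\emptyset$. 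The chain condition yields a triangular incompatibility: for $\beta<\alpha$ one has $y_\beta\in V_{\beta+1}\subseteq V_\alpha\subseteq\overline{V_\alpha}$, so $y_\beta\notin U_\alpha$, forcing the existence of some $x\in S_\alpha$ with $|y_\beta(x)-y_\alpha(x)|\geq\epsilon_\alpha$.

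Next I apply the $\Delta$-system lemma to $(S_\alpha)_{\alpha<\omega_1}$, together with countable pigeonholing on $\epsilon_\alpha$ and $|S_\alpha|$ and a finite-dimensional compactness refinement on $[-1,1]^\Delta$, to extract an uncountable $A\subseteq\omega_1$ where $(S_\alpha)_{\alpha\in A}$ is a $\Delta$-system with root $\Delta$, the parameters $\epsilon_\alpha$ and $|S_\alpha|$ are constant values $\epsilon$ and $n$, and the restrictions $y_\alpha|_\Delta$ are mutually $\epsilon/3$-close. The witnessing $x\in S_\alpha$ for each pair $\beta<\alpha$ in $A$ must then lie in the free part $T_\alpha=S_\alpha\setminus\Delta$. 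A further Ramsey coloring on $[A]^2$ (finitely many colors, given by the position of the witness in the ordered finite set $T_\alpha$ and by the sign of the oscillation) yields an uncountable $C\subseteq A$ and distinguished $x_\alpha\in T_\alpha$ such that
\[
y_\beta(x_\alpha)-y_\alpha(x_\alpha)\geq\epsilon\quad\text{for every }\beta<\alpha\text{ in }C.
\]

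The final step will exploit the convex-analytic structure of $F$. Since $z\mapsto z(x_\alpha)$ is weak$^*$-continuous and affine, the closed half-space $H_\alpha=\{z:z(x_\alpha)\geq y_\alpha(x_\alpha)+\epsilon\}$ is weak$^*$-closed and convex and contains every $y_\beta$ with $\beta<\alpha$ in $C$, so the weak$^*$-closure of $\text{conv}\{y_\beta:\beta<\alpha,\beta\in C\}$ sits inside $H_\alpha\cap F$, while $y_\alpha\notin H_\alpha$. By weak$^*$-compactness, for each countable limit $\lambda\in C$ the initial segment $(y_\beta)_{\beta<\lambda,\beta\in C}$ admits a weak$^*$-cluster point $y^*_\lambda\in F$, which lies in $\overline{V_\lambda}\subseteq V_{\lambda+1}$ and satisfies $y^*_\lambda(x_\alpha)\geq y_\alpha(x_\alpha)+\epsilon$ for every $\alpha\in C$ with $\alpha>\lambda$. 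A carefully chosen convex combination of such cluster points together with the $y_\alpha$'s, exploiting the uniform $\epsilon/3$-closeness on $\Delta$ against the triangular separation on the free parts, should produce a point of $F$ lying simultaneously in $\overline{V_\alpha}$ and in the convex basic open set $U_\alpha$ for some $\alpha\in C$, contradicting $U_\alpha\cap\overline{V_\alpha}=\emptyset$. The main obstacle will be executing this final convex-combination step: all the preceding work is essentially set-theoretic combinatorics, and only here must one genuinely invoke the convex linear structure of the dual ball, both of $F$ itself and of the basic weak$^*$-neighborhoods $U_\alpha$.
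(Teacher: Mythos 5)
There is a genuine gap, and it sits exactly where you place it: the ``final convex-combination step'' is not an obstacle to be smoothed out later, it is the entire content of the proposition. Everything preceding it --- the choice of $y_\alpha\in V_{\alpha+1}\setminus\overline{V_\alpha}$, the basic neighborhoods $U_\alpha$, the $\Delta$-system and Ramsey refinements, the triangular inequality $y_\beta(x_\alpha)-y_\alpha(x_\alpha)\geq\epsilon$ for $\beta<\alpha$ --- goes through verbatim for \emph{non-convex} weak$^*$-compact sets, where the proposition is false: embed $[0,\omega_1]$ into $B_{C([0,\omega_1])^*}$ via Dirac measures and take $V_\alpha=[0,\alpha)$ (this is precisely the remark following the proposition in the paper). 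So no contradiction can emerge until convexity is genuinely used, and your plan for using it does not work as described. First, the configuration you extract is itself realizable inside a convex weak$^*$-compact set: in $B_{\ell_\infty(\omega_1)}=B_{\ell_1(\omega_1)^*}$ take $x_\alpha=e_\alpha$ and $y_\alpha=\epsilon\,\chi_{(\alpha,\omega_1)}$, so that $y_\beta(x_\alpha)-y_\alpha(x_\alpha)=\epsilon$ for all $\beta<\alpha$. Hence the inequality plus convexity of $F$ cannot yield a contradiction; the chain structure must re-enter, and you do not say how. Second, your proposed endgame is to produce a point lying in $\overline{V_\alpha}\cap U_\alpha$ by taking convex combinations of points of $\overline{V_\alpha}$ (the $y_\beta$, $\beta<\alpha$) and of cluster points $y^*_\lambda\in V_{\lambda+1}$. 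But $V_\alpha$ and $\overline{V_\alpha}$ are arbitrary open sets and their closures, not convex sets, so convex combinations of their elements have no reason to remain in them; the one thing convexity of $F$ gives you is that the combination stays in $F$, which is not enough.

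For comparison, the paper's proof is short and avoids all of this combinatorics. Set $V=\bigcup_{\alpha<\omega_1}V_\alpha$; by compactness $F\setminus V\neq\emptyset$, and it is closed in the weak topology of the Banach space $X^*$ since that topology is finer than the weak$^*$ topology. Convexity enters only once and cleanly: a convex subset of a topological vector space is connected, so $F\setminus V$ cannot also be weakly open, and there is a point $x\in F\setminus V$ in the weak closure of $V$. Kaplansky's theorem (countable tightness of the weak topology) then gives a countable $D\subseteq V$ with $x\in\overline{D}^{w}$; since $D$ is countable it lies in some $V_\gamma$, whence $x\in\overline{V_\gamma}^{w}\subseteq\overline{V_\gamma}^{w^*}\subseteq V_{\gamma+1}\subseteq V$, a contradiction. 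If you want to salvage your approach you would need a mechanism that plays the role of Kaplansky's theorem, i.e., something that forces an accumulation phenomenon along countably many indices; the $\Delta$-system machinery you import from Proposition~\ref{chain} is adapted to clopen sets in products of second countable spaces and does not supply it here.
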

\begin{proof}
Let $X$ be a Banach space, $B_{X^*}$ its dual unit ball endowed with the weak$^*$ 
topology and $F$ a closed convex subspace of $B_{X^*}$. Suppose $(V_\alpha)_{\alpha \in \omega_1}$ is a well-ordered chain of
open sets of $F$ such that 
$\overline{V_\alpha} \subseteq V_\beta$ for any $\alpha<\beta<\omega_1$. Put $V = \bigcup_{\alpha \in \omega_1} V_\alpha$. Since $F$ is a closed subspace of $B_{X^*}$ and $B_{X^*}$ is weakly$^*$ compact, then $F \setminus V$ is a nonempty closed set of $F$. It is as well 
a weakly closed set, since the weak topology is finer than the weak$^*$ topology. 
It cannot be a weakly open set in $F$, because $F$ is convex, and since $X$ endowed with the weak topology is a topological vector space, $F$ is weakly connected. Then, there is 
$x \in \overline{B_{X^*} \setminus V}^{w} \cap \overline{V}^{w}$. 

Now, by Kaplansky's theorem (e.g., 4.49 of \cite{fabian}), every Banach 
space has countable tightness in its weak topology. So, there is a countable set 
$D \subseteq V$ such that $x \in \overline{D}^w$. Since $D$ is countable, there
is $\gamma\in \omega_1$ 
such that $D \subseteq V_\gamma$, which implies that 
$x \in \overline{V}^w_\gamma \subseteq \overline{V}_\gamma^{w^*} \subseteq V_{\gamma+1} \subseteq V$, 
contradicting the fact that $x \in  \overline{F \setminus V}^{w}$.
\end{proof}

The hypothesis of $F$ being convex is crucial, as any compact Hausdorff space $K$
can be homeomorphically embedded in the dual ball of $C(K)$ with the weak$^*$
topology by associating
the Dirac $\delta_x$ to each $x\in K$.
In the context of the above result it is also worthy to mention the following:

\begin{proposition} 
Note that if $X$ is a Banach space of density $\kappa$, then there is a well-ordered
increasing chain $(U_\xi)_{\xi<\kappa}$ of open sets  in the dual unit ball $B_{X^*}$ of $X$
with the weak$^*$ topology. 
\end{proposition}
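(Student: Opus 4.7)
My plan is to realize the chain as the family of complements in $B_{X^*}$ of the annihilators of a strictly increasing transfinite chain of closed linear subspaces of $X$. Recall that for a closed linear subspace $Y\subseteq X$, its annihilator
\[Y^\perp=\{f\in X^*: f|Y=0\}=\bigcap_{y\in Y}\{f\in X^*: f(y)=0\}\]
is weak$^*$ closed in $X^*$, so $B_{X^*}\setminus Y^\perp$ is weak$^*$ open in $B_{X^*}$, and passage to the annihilator reverses inclusion. Hence an increasing chain of closed subspaces of $X$ automatically produces an increasing chain of weak$^*$ open subsets of $B_{X^*}$.

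I would build the chain $(Y_\xi)_{\xi<\kappa}$ of closed subspaces by transfinite recursion: put $Y_0=\{0\}$, let $Y_\lambda=\overline{\bigcup_{\xi<\lambda} Y_\xi}$ at limit stages $\lambda$, and at each successor pick some $y_\xi\in X\setminus Y_\xi$ and set $Y_{\xi+1}=\overline{\mathrm{span}(Y_\xi\cup\{y_\xi\})}$. The essential point is that the successor step is always legitimate, i.e., $Y_\xi\neq X$ for every $\xi<\kappa$. This is a standard density count: adjoining one element increases the density by at most one, so by induction $\mathrm{dens}(Y_\xi)\leq |\xi|+\aleph_0<\kappa$ for every $\xi<\kappa$ (using that the density of an infinite-dimensional Banach space is infinite, and coincides with dimension in the finite-dimensional case). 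Since $\mathrm{dens}(X)=\kappa$, $Y_\xi$ is a proper subspace of $X$, so the required $y_\xi$ exists; and by construction $Y_\xi\subsetneq Y_{\xi+1}$, hence $Y_\xi\subsetneq Y_\eta$ whenever $\xi<\eta<\kappa$.

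Finally, I would set $U_\xi=B_{X^*}\setminus Y_\xi^\perp$, which is weak$^*$ open by the opening remark. For strict inclusion, fix $\xi<\eta<\kappa$ and choose $y\in Y_\eta\setminus Y_\xi$. Since $Y_\xi$ is closed and $y\notin Y_\xi$, the Hahn-Banach theorem yields $f\in X^*$ with $f|Y_\xi=0$, $f(y)\neq 0$, and, after rescaling, $\|f\|\leq 1$. Then $f\in(Y_\xi^\perp\cap B_{X^*})\setminus Y_\eta^\perp\subseteq U_\eta\setminus U_\xi$, so $U_\xi\subsetneq U_\eta$. I anticipate no serious obstacle; the only mildly subtle ingredient is the density accounting that keeps the recursion ascending through all $\kappa$ stages.
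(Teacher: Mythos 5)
Your proposal is correct and is essentially the paper's own argument in different notation: the paper builds by transfinite induction a sequence $(x_\xi,x^*_\xi)$ with $x^*_\xi(x_\eta)=0$ for $\eta<\xi$ and $x^*_\xi(x_\xi)=1$, and sets $U_\xi=\bigcup_{\eta<\xi}\{x^*\in B_{X^*}:x^*(x_\eta)\neq 0\}$, which is exactly your $B_{X^*}\setminus Y_\xi^\perp$ for $Y_\xi=\overline{\mathrm{span}}\{x_\eta:\eta<\xi\}$, with your Hahn--Banach witness for strict inclusion playing the role of the paper's $x^*_\xi$. The only caveat, shared by the paper's proof and irrelevant to its use here (only uncountable $\kappa$ matters), is the degenerate case $\kappa=\aleph_0$ with $X$ finite-dimensional, where the recursion cannot run for $\omega$ steps.
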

\begin{proof}
Using the Hahn-Banach theorem,
construct by transfinite induction a sequence $(x_\xi, x^*_\xi)_{\xi<\kappa} \subseteq X \times B_{X^*}$
such that $x^*_\xi(x_\eta)=0$ for $\eta<\xi<\kappa$ and $x^*_\xi(x_\xi)=1$ for
$\xi<\kappa$. Then 
$$U_\xi=\bigcup_{\eta<\xi}\{x^*\in B_{X^*}: x^*(x_\eta)\not=0\}$$
is as required.
\end{proof}

\begin{lemma}\label{lema-convex}
Given a Banach space $X$ and a dense
subset $D$ of its unit ball, the natural restriction mapping $f: B_{X^*} \rightarrow [-1,1]^{D}$ defined 
by $f(x^*)=x^*|_{D}$ is a homeomorphism onto its image with respect to
the weak$^*$ and the product topologies, with the property
 that $F \subseteq B_{X^*}$ is convex if and only if $f(F)$ is convex. 
\end{lemma}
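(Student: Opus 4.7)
The plan is to verify three claims about the restriction map $f$: it is well-defined and continuous, it is injective with continuous inverse (so a homeomorphism onto its image), and it preserves and reflects convexity. None of these are deep; the map is essentially a change of coordinates.

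First I would observe that $f$ is well-defined (since each $x^* \in B_{X^*}$ satisfies $|x^*(d)| \le \|d\| \le 1$ for $d \in D$) and linear. Continuity is immediate: the weak$^*$ topology on $B_{X^*}$ is pointwise convergence on $X$, the product topology on $[-1,1]^D$ is pointwise convergence on $D$, and $D \subseteq X$, so any weak$^*$-convergent net in $B_{X^*}$ maps to a pointwise-convergent net in $[-1,1]^D$. For injectivity, if $f(x^*) = f(y^*)$ then $x^*$ and $y^*$ agree on $D$; by weak$^*$ continuity (equivalently, norm continuity) and density of $D$ in $B_X$, they agree on $B_X$, and then by linearity they agree on all of $X$ (scale any $x \in X$ to fit inside $B_X$).

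Next I would invoke the Banach--Alaoglu theorem: $B_{X^*}$ is weak$^*$-compact, and $[-1,1]^D$ is Hausdorff, so the continuous injection $f$ is automatically a homeomorphism onto its image. This handles the topological part of the statement.

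For the convexity claim, linearity of $f$ immediately gives that $f(F)$ is convex whenever $F$ is convex. Conversely, suppose $f(F)$ is convex. Given $x^*, y^* \in F$ and $t \in [0,1]$, the point $tf(x^*) + (1-t)f(y^*) = f(tx^* + (1-t)y^*)$ lies in $f(F)$, so there exists $z^* \in F$ with $f(z^*) = f(tx^* + (1-t)y^*)$; by injectivity of $f$, $z^* = tx^* + (1-t)y^* \in F$, so $F$ is convex.

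There is no genuine obstacle here; the only point that requires a moment of care is that density of $D$ is given in $B_X$ rather than $X$, but linearity lets one extend agreement on $B_X$ to agreement on $X$, yielding injectivity.
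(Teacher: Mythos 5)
Your proof is correct and follows essentially the same route as the paper's: continuity from the definition of the two topologies, injectivity from density of $D$ in the unit ball together with linearity, the compact-to-Hausdorff argument for the homeomorphism (which the paper leaves implicit), and linearity of $f$ and of its inverse for the convexity equivalence. You simply spell out details that the paper's terse proof omits.
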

\begin{proof} The preimages of standard basic open sets in the product are
weakly$^*$ open, so that $f$ is continuous. As two distinct functionals must 
differ on an element of the unit ball we see that
 $f$ is a homeomorphism onto its image. 
$\Phi: X^*\rightarrow \R^D$ given by $\Phi(x^*)(d)=x^*(d)$ is linear and one-to-one
and hence its inverse is linear and one-to-one. Both mappings preserve convexity.
The lemma follows as $f=\Phi|B_{X^*}$.
\end{proof}

\begin{proposition}\label{pre-image-Banach}
The dual unit ball of every Banach space endowed with the weak$^*$ topology has a continuous preimage which is compact, totally disconnected  of same weight and with no uncountable well-ordered chain of clopen sets.
\end{proposition}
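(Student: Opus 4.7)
The plan is to stitch together the three tools the paper has already assembled: Lemma \ref{lema-convex} (to realize $B_{X^*}$ as a closed convex subset of a product cube while preserving convexity), Proposition \ref{ballBanach} (to supply the convex-chain hypothesis), and Proposition \ref{chain} (to pass from a convex set sitting in a cube to a totally disconnected preimage with no uncountable well-ordered clopen chains).

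Concretely, let $X$ be a Banach space and let $\kappa$ be the weight of $B_{X^*}$ in the weak$^*$ topology, which coincides with the density of $X$. Fix a dense subset $D$ of the closed unit ball of $X$ with $|D|=\kappa$. By Lemma \ref{lema-convex}, the restriction map $f:B_{X^*}\to [-1,1]^D$ is a weak$^*$-to-product homeomorphism onto its image that preserves convex subsets. Composing coordinatewise with the affine homeomorphism $t\mapsto(t+1)/2$ of $[-1,1]$ onto $[0,1]$ (which is also convexity-preserving), one obtains a closed convex subspace $K\subseteq [0,1]^D$ homeomorphic to $B_{X^*}$, with closed convex subsets of $K$ corresponding bijectively to closed convex subsets of $B_{X^*}$.

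Now set $L=\phi_D^{-1}[K]\subseteq (\{0,1\}^\N)^D$. Since $\phi_D$ is continuous and $K$ is closed, $L$ is a closed subspace of the compact totally disconnected space $(\{0,1\}^\N)^D$; in particular $L$ is compact and totally disconnected, and $\phi_D|L$ is a continuous surjection of $L$ onto $K$, which is itself homeomorphic to $B_{X^*}$. The ambient product has weight $|D|\cdot\aleph_0=\kappa$, so $w(L)\le\kappa$; conversely $L$ maps continuously onto $K$ of weight $\kappa$, giving $w(L)=\kappa$ as required.

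It remains to verify the chain hypothesis needed to apply Proposition \ref{chain} to $K$. Transporting Proposition \ref{ballBanach} through the convexity-preserving homeomorphism of Lemma \ref{lema-convex}, one sees that no closed convex subspace $F$ of $K$ admits an uncountable well-ordered chain $(V_\alpha)_{\alpha<\omega_1}$ of $F$-open sets with $\overline{V_\alpha}\subseteq V_\beta$ for $\alpha<\beta<\omega_1$. This is precisely the hypothesis of Proposition \ref{chain}, whose conclusion then yields that $L$ has no uncountable well-ordered chain of clopen sets, completing the proof. I do not foresee any serious obstacle; the only fussy points are tracking convexity through each map and checking that the enveloping Cantor-cube product has the right weight.
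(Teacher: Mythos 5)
Your proposal is correct and follows essentially the same route as the paper's own proof: transport $B_{X^*}$ into $[0,1]^D$ via Lemma \ref{lema-convex} composed with the coordinatewise affine map, invoke Proposition \ref{ballBanach} to verify the convex-chain hypothesis for the image $K$, and apply Proposition \ref{chain} to $L=\phi_D^{-1}[K]$. Your weight computation is in fact slightly more explicit than the paper's, which only notes that the weight of $K$ is at most $|D|$.
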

\begin{proof} Let $X$ be a Banach space of density $\kappa$.
Let $D\subseteq X$ be a dense subset of the unit ball of cardinality $\kappa$.
Let $f: B_{X^*} \rightarrow [-1,1]^{D}$ be defined by $f(x^*)=x^*|_{B_{X}}$ 
as in Lemma \ref{lema-convex}, let $g$ be the linear 
order-preserving homeomorphism from $[-1,1]$ onto $[0,1]$
 and let ${g}^D: [-1,1]^{D} \rightarrow [0,1]^{D}$ be defined coordinatewise by
 ${g}^D(x)(d) = g(x(d))$ for any $d\in D$ and $x\in [-1,1]^D$. Clearly
 ${g}^D$ is a homeomorphism such that $F \subseteq [-1,1]^{D}$
 is convex if and only if $g^D[F]$ is convex.
 Then, $h=g \circ f: B_{X^*} \rightarrow [0,1]^{D}$
 is a homeomorphism onto its image such that $F \subseteq B_{X^*}$ 
is convex if and only if $h[F]$ is convex. 

Let $K = h[B_{X^*}] \subseteq [0,1]^{D}$. If $F$ is a convex subset of $K$, then $h^{-1}[F]$ is a convex subset of $B_{X^*}$ and Proposition \ref{ballBanach} guarantees that it 
contains no uncountable well-ordered chain of open sets $(V_\alpha)_{\alpha<\omega_1}$ such that $\overline{V}_\alpha^F \subseteq V_\beta$ for $\alpha < \beta <\omega_1$. Since $h$ is a homeomorphism onto $K$, we get that $F$ has no such  chain of open sets either. 

Finally, since $K$ satisfies the hypotheses of Proposition \ref{chain},
 we get that $L = {(\phi_D)}^{-1}[K]$ has no uncountable well-ordered chain of clopen sets.
Since the weight of $K$ cannot be bigger than $D$, this concludes the proof. 
\end{proof}

\begin{proposition}\label{mainprop} Suppose that
 $A$ is a Boolean algebra.  There is a Boolean algebra ${B}$ 
of same cardinality as ${A}$ but without uncountable
well-ordered
chains   such that the Banach space $C(K_{B})$ contains an isometric copy of $C(K_{A})$.
\end{proposition}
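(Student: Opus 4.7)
The plan is to apply Proposition \ref{pre-image-Banach} directly to the Banach space $X = C(K_A)$. This produces a compact, totally disconnected space $L$, of the same weight as $B_{C(K_A)^*}$, having no uncountable well-ordered chain of clopen subsets, together with a continuous surjection $\pi \colon L \to B_{C(K_A)^*}$. Let $B$ denote the Boolean algebra of clopen subsets of $L$; by Stone duality $L$ is homeomorphic to $K_B$, and the absence of uncountable well-ordered chains in $B$ follows immediately from the corresponding property of $L$.

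For the isometric embedding I would compose two standard maps. First, the evaluation map $\iota \colon C(K_A) \to C(B_{C(K_A)^*})$ given by $\iota(f)(x^*) = x^*(f)$ is a linear isometry onto its image: linearity is clear, and the norm is preserved because the Dirac functionals $\delta_x$ for $x \in K_A$ lie in $B_{C(K_A)^*}$ and witness $\|\iota(f)\|_\infty \ge \|f\|_\infty$. Second, any continuous surjection $\pi\colon L \to M$ between compact Hausdorff spaces induces an isometric embedding $C(M) \to C(L)$ via $g \mapsto g \circ \pi$, by the Tietze-style argument that every $g \circ \pi$ attains its supremum on $L$ equal to $\|g\|_\infty$. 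Composing these gives an isometric embedding
\[
C(K_A) \xrightarrow{\iota} C(B_{C(K_A)^*}) \longrightarrow C(L) = C(K_B),
\]
as required.

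For the cardinality count, I would assume $A$ is infinite of cardinality $\kappa$ (the finite case being trivial). Finite rational linear combinations of characteristic functions of elements of $A$ form a norm-dense subset of $C(K_A)$ of cardinality $\kappa$, so the density of $C(K_A)$ equals $\kappa$, and hence so does the weight of $B_{C(K_A)^*}$. By Proposition \ref{pre-image-Banach} the weight of $L$ equals $\kappa$, whence $|B| = \kappa$ (the clopen algebra of an infinite totally disconnected compact space has cardinality equal to its weight). The reverse inequality $|B| \ge \kappa$ is automatic since an isometric embedding $C(K_A) \hookrightarrow C(K_B)$ forces the density of $C(K_B)$, and hence the weight of $K_B$, to be at least $\kappa$.

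There is no substantial obstacle here: all the technical work is already contained in Propositions \ref{chain}, \ref{ballBanach}, and \ref{pre-image-Banach}, and the present statement is essentially a packaging argument combining these with the canonical isometric embedding $X \hookrightarrow C(B_{X^*})$. The only point requiring some care is the cardinality matching, where one has to chain together the equalities $|A| = \mathrm{dens}(C(K_A)) = \mathrm{weight}(B_{C(K_A)^*}) = \mathrm{weight}(L) = |B|$.
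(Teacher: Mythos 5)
Your proposal is correct and follows essentially the same route as the paper: apply Proposition \ref{pre-image-Banach} to $X=C(K_A)$, take $B=Clop(L)$, and compose the canonical evaluation embedding $C(K_A)\hookrightarrow C(B_{X^*})$ with the embedding $g\mapsto g\circ\pi$ induced by the surjection $\pi\colon L\to B_{X^*}$ (note that the isometry of the latter needs only surjectivity of $\pi$, not any Tietze-type extension). Your cardinality bookkeeping is a slightly more detailed version of what the paper leaves implicit.
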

\begin{proof}
Let $X= C(K_{A})$ and by Proposition \ref{pre-image-Banach}, the dual unit ball $B_{X^*}$ has a continuous preimage $L$ which is compact, totally disconnected, of the same weight as $B_{C(K_{A})^*}$ 
and with no uncountable well-ordered chain of clopen sets. Hence, ${B}= Clop (L)$ is a Boolean algebra of same cardinality as ${A}$ which has no uncountable well-ordered chains and, therefore, has no isomorphic copy of ${A}$. But $K_{B}$ is homeomorphic to $L$, so that $C(K_{B})$ is isometric to $C(L)$, which contains an isometric copy of $C(B_{X^*})$, which in turn contains an isometric copy of $X = C(K_{A})$.
\end{proof}

\bibliographystyle{amsplain}

\begin{thebibliography}{20}


\bibitem{amirarbel}  D. Amir, B. Arbel, 
\emph{On injections and surjections of continuous function spaces}.
Israel J. Math. 15 (1973), 301--310. 




\bibitem{universal} C. Brech, P. Koszmider, \emph{On universal Banach spaces of density continuum}.
 Israel J. Math. 190 (2012), 93--110.

\bibitem{ug} C. Brech, P. Koszmider, \emph{On universal spaces for the class of Banach 
  spaces whose dual balls are uniform Eberlein compacts}. Proc. Amer. Math. Soc. 141 (2013), 1267--1280 

\bibitem{sums}  C. Brech, P.  Koszmider, \emph{$\ell_\infty$-sums and the 
Banach space $\ell_\infty/c_0$}. Fund. Math. 224 (2014), no. 2, 175--185.


\bibitem{almostno}
A.  Dow, K. P. Hart, \emph{$\omega^*$ has (almost) no continuous images}. Israel J. Math. 109 (1999), 29--39.

\bibitem{dowhartmeasure} A. Dow, K. P. Hart, \emph{The measure algebra does not always embed.}
 Fund. Math. 163 (2000), no. 2, 163--176.

\bibitem{dowhart} A. Dow, K. P. Hart, \emph{A universal continuum of weight $\aleph$}. 
Trans. Amer. Math. Soc. 353 (2001), no. 5, 1819--1838. 

\bibitem{dowpfa} A. Dow \emph{PFA and complemented subspaces of $\ell_\infty/c_0$},
Preprint, July 2014.


\bibitem{volpin} A. S. Esenin-Volpin, \emph{On the existence of a universal bicompactum of arbitrary weight}.
 (Russian) Doklady Akad. Nauk SSSR (N.S.) 68, (1949). 649--652.

\bibitem{fabian}
M. Fabian, P. Habala, P. H\'ajek, V. Montesinos Santaluc\'\i a, J. Pelant, V. Zizler, \emph{Functional analysis and infinite-dimensional geometry}. CMS Books in Mathematics/Ouvrages de Math\'ematiques de la SMC, 8. Springer-Verlag, New York, 2001. x+451 pp.


\bibitem{holsztynski} W. Holszty{\'n}ski, \emph{Continuous mappings induced by isometries of spaces of continuous function}. Studia Math. 26 (1966), 133--136.

\bibitem{jech} T. Jech, \emph{Set theory}. The third millennium edition, revised and expanded. Springer Monographs in Mathematics. Springer-Verlag, Berlin, 2003.

\bibitem{koppelberg}
 S. Koppelberg, \emph{Handbook of Boolean algebras.} Vol. 1. Edited by J. Donald Monk and Robert Bonnet. North-Holland Publishing Co., Amsterdam, 1989.


\bibitem{unisur} P. Koszmider, \emph{Universal objects and associations between classes of Banach spaces
and  classes of compact  spaces.} pp. 93--115. In \emph{Selected Topics in Combinatorial Analysis}, eds.,
M. Kurilic, S. Todorcevic, Sbornik Radova,  17(25).  Matematicki Institut SANU, Beograd, 2015.





\bibitem{parovicenko} I. I. Parovi\v cenko, \emph{On a universal bicompactum of weight $\aleph$}. Dokl. Akad. Nauk SSSR 150 1963 36--39. 




\bibitem{shelahusvyatsov} S. Shelah, A. Usvyatsov, \emph{Banach spaces and groups - order properties and universal models}. Israel J. Math. 152 (2006), 245--270.

\bibitem{quotients} S. Todor\v cevi\'c, \emph{Gaps in analytic quotients}. Fund. Math. 156 (1998), no. 1, 85--97.


\end{thebibliography}

\end{document}